\newtheorem{thm}{Theorem}[section]
\newtheorem*{thma}{Theorem A}
\newtheorem{lem}[thm]{Lemma}
\newtheorem{quest}{Question}
\theoremstyle{definition}
\newtheorem{defn}{Definition}[section]
\theoremstyle{open}
\newcommand{\PP}{{\mathbb P}}
\newcommand{\C}{{\mathbb C}}
\newcommand{\R}{{\mathbb R}}
\newcommand{\T}{{\mathbb T}}
\newcommand{\Z}{{\mathbb Z}}
\newcommand{\D}{{\mathbb D}}
\newcommand{\DD}{{\mathcal D}}
\newcommand{\diam}{{\textup{diam}}}
\newcommand{\ra}{{\rightarrow}}
\newcommand{\Int}{{\textup{Int}}}
\newcommand{\Cl}{{\textup{Cl}}}
\newcommand{\diff}{\textup{Diff}}
\newcommand{\homeo}{\textup{Homeo}}
\newcommand{\SL}{\textup{SL}}
\newcommand{\SO}{\textup{SO}}
\newcommand{\Mob}{\textup{M\"ob}}
\newcommand{\vol}{\textup{Vol}}
\begin{document}
\title{Topological Entropy and Diffeomorphisms of Surfaces with Wandering Domains\thanks{
2000 Mathematics Subject Classification: Primary 30C62, Secondary 28D20}}

\date{}

\author{Ferry Kwakkel \thanks{The first author was supported by Marie Curie grant MRTN-CT-2006-035651 (CODY).} 
\and Vladimir Markovic}

\maketitle

\begin{abstract}
Let $M$ be a closed surface and $f$ a diffeomorphism of $M$. A diffeomorphism is said to permute a dense collection
of domains, if the union of the domains are dense and the iterates of any one domain are mutually disjoint. In this note,
we show that if $f \in \diff^{1+\alpha}(M)$, with $\alpha>0$, and permutes a dense collection of domains with bounded geometry, 
then $f$ has zero topological entropy. 
\end{abstract}

\section{Definitions and statement of results}\label{wandering_defn_statements}

A result of A. Norton and D. Sullivan~\cite{NS} states that a diffeomorphism $f \in \diff^3_0(\T^2)$ having {\em Denjoy-type} can not have a wandering disk whose iterates have the same {\em generic shape}. By diffeomorphisms of Denjoy-type are meant diffeomorphisms of the two-torus, isotopic to the identity, that are obtained as an extension of an irrational translation of the torus, for which the semi-conjugacy has countably many non-trivial fibers. If these fibers have non-empty interior, then the corresponding diffeomorphism has a wandering disk. Further, by generic shape is meant that the only elements of $\SL(2,\Z)$ preserving the shape are elements of $\SO(2,\Z)$, such as round disks and squares. In a similar spirit, C. Bonatti, J.M. Gambaudo, J.M. Lion and C. Tresser in~\cite{bonatti} show that certain infinitely renormalizable diffeomorphisms of the two-disk that are sufficiently smooth, can not have wandering domains if these domains have a certain boundedness of geometry. 

In this note, we study an analogous problem, namely the interplay between the geometry of iterates of domains under a diffeomorphism and its topological entropy. To state the precise result, we first need some definitions. Let $(M,g)$ be a closed surface, that is, a smooth, closed, oriented Riemannian two-manifold, equipped with the canonical metric $g$ induced from the standard conformal metric of the universal cover $\PP^1, \C$ or $\D^2$. We denote by $d(\cdot, \cdot)$ the distance function relative to the metric $g$. Let $\diff^r(M)$ be the group of diffeomorphisms of $M$, where for $r \geq 0$ finite, $f$ is said to be of class $C^r$ if $f$ is continuously differentiable up to order $[r]$ and the $[r]$-th derivative is $(r)$-H\"older, with $[r]$ and $(r)$ the integral and fractional part of $r$ respectively. We identity $\diff^0 (M)$ with $\homeo(M)$, the group of homeomorphisms of $M$. 

Given $f \in \homeo(M)$, for each $n \geq 1$, define the metric $d_n$ on $M$ given by $d_n(x,y) = \max_{1 \leq i \leq n}\{d(f^i(x), f^i(y))\}$. 
Given $\epsilon>0$, a subset $U \subset M$ is said to be $(n,\epsilon)$ separated if $d_n(x,y) \geq \epsilon$ for every $x, y \in U$ with $x \neq y$.
Let $N(n,\epsilon)$ be the maximum cardinality of an $(n,\epsilon)$ separated set. The topological entropy\index{topological entropy} is defined as 
\begin{equation*}
h_{\textup{top}}(f) = \lim_{\epsilon \ra 0} \left( \lim_{n \ra \infty} \sup \frac{1}{n} \log N(n,\epsilon) \right).
\end{equation*}
Next, we make precise the notion of a homeomorphism of a surface permuting a dense collection of domains. 

\begin{defn}\index{permuting a dense collection of domains}\label{defn_perm_dom}
Let $S \subset M$ be compact and $\DD := \{ D_k \}_{k \in \Z}$ the collection of connected components of the complement of $S$,
with the property that $\Int(\Cl(D_k)) = D_k$, where $\Cl(D)$ is the closure of $D$ in $M$. We say $f \in \homeo(M)$ 
{\em permutes a dense collection of domains} if
\begin{enumerate}
\item[(1)] $f(S) = S$ and $\Cl(D_k) \cap \Cl(D_{k'}) = \emptyset$ if $k \neq k'$,
\item[(2)] for every $k \in \Z$, $f^n( D_k) \cap D_k = \emptyset$ for all $n \neq 0$, and
\item[(3)] $\bigcup_{k \in \Z} D_k$ is dense in $M$.
\end{enumerate}
\end{defn}

Note that we do not assume a domain to be recurrent, nor do we assume the orbit of a single domain to be dense. A {\em wandering domain} is a domain with mutually disjoint iterates under $f$ such that the orbit of the domain is recurrent.\index{wandering domain} Thus a diffeomorphism with a wandering domain with dense orbit is a special case of definition~\ref{defn_perm_dom}. Denote $\exp_p \colon T_pM \ra M$ the exponential mapping 
at $p \in M$. The {\em injectivity radius}\index{injectivity radius} at a point $p \in M$ is defined as the largest radius for which $\exp_p$ is 
a diffeomorphism. The injectivity radius $\iota(M)$ of $M$ is the infimum of the injectivity radii over all points $p \in M$. As $M$ is compact, $\iota(M)$ is positive. 

\begin{defn}[Bounded geometry]\label{bounded geometry}
A collection of domains $\{ D_k \}_{n \in \Z}$ on a surface $M$ is said to have {\em bounded geometry} if the following holds: $\Cl(D_k)$ is contractible in $M$ and there exists a constant $\beta \geq 1$ such that for every domain $D_k$ in the collection, there exist $p_k \in D_k$ 
and $0 < r_k \leq R_k$ such that 
\begin{equation}
B(p_k, r_k \subseteq D_k \subseteq B(p_k, R_k) ,~\textup{with}~ R_k / r_k \leq \beta,
\end{equation}
where $B(p,r) \subset M$ is the ball centered at $p \in M$ with radius $r>0$. If no such $\beta$ exists, then the collection is said to 
have {\em unbounded geometry}. 
\end{defn}

By $\Cl(D_k)$ being contractible in $M$ we mean that $\Cl(D_k)$ is contained in an embedded topological disk in $M$. Our definition of bounded geometry 
is equivalent to the notion of bounded geometry in the theory of Kleinian groups and complex dynamics. It is not difficult, given a surface of any genus, to construct homeomorphisms of that surface with positive entropy that permute a dense collection of domains. We show that producing examples that have a certain amount of smoothness is possible only to a limited degree.

\begin{thma}[Topological entropy versus bounded geometry]\label{thm_3a}
Let $M$ be a closed surface and $f \in \diff^{1+\alpha}(M)$, with $\alpha >0$. If $f$ permutes a dense collection of domains with 
bounded geometry, then $f$ has zero topological entropy.
\end{thma}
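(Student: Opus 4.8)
\emph{Proof proposal.} The plan is to argue by contradiction: assuming $h_{\textup{top}}(f)>0$ I would extract a hyperbolic horseshoe and show it is incompatible with a dense family of round wandering domains, the point being that hyperbolicity stretches a $2$-dimensional region anisotropically, while every iterate $f^{n}(D_{k})$ is again one of the $D_{k}$ and so must stay round. Two preliminary remarks. Since $f$ is a homeomorphism with $f(S)=S$, it permutes the components of $M\setminus S$, so $\DD=\{D_{k}\}$ is $f$-invariant as a set of sets; hence each $f^{n}(D_{k})$ lies in $\DD$ and thus has bounded geometry with the \emph{same} constant $\beta$. Also the $D_{k}$ are pairwise disjoint and $\sum_{k}\area(D_{k})\le\area(M)<\infty$, so bounded geometry forces $\#\{k:\diam(D_{k})\ge\eta\}<\infty$ for every $\eta>0$, and each $D_{k}$ is wandering by condition~(2). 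Now by Katok's theorem for $C^{1+\alpha}$ surface diffeomorphisms, $h_{\textup{top}}(f)>0$ yields, for some $N\ge1$, a hyperbolic horseshoe for $g:=f^{N}$: a closed rectangle $R$, a constant $\lambda>1$, stable/unstable cone fields, and — crucially — a \emph{uniform bounded distortion} property: there is $C_{0}\ge1$ such that $g^{T}$ restricted to any set whose iterates $g^{0},\dots,g^{T}$ all lie in $R$ has distortion at most $C_{0}$, independently of $T$. This is the single place where $\alpha>0$ is used (summability of the Hölder moduli of $Dg$ along an orbit whose relevant direction contracts geometrically); for $\alpha=0$ no contradiction is expected. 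Note $g$ still permutes $\DD$ with the same $\beta$, and $h_{\textup{top}}(g)=N\,h_{\textup{top}}(f)>0$, so it suffices to contradict $h_{\textup{top}}(g)>0$.

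For each $T$, I would use the Markov structure of the horseshoe to produce a sub-rectangle $\sigma_{T}\subseteq R$ which is full across $R$ in the stable direction and of width $\asymp\lambda^{-T}$ in the unstable direction, with $g^{i}(\sigma_{T})\subseteq R$ for $0\le i\le T$; then $g^{T}$ has distortion $\le C_{0}$ on $\sigma_{T}$, expands the unstable direction by $\asymp\lambda^{T}$, and contracts the stable one by $\asymp\lambda^{-T}$. Using that $\bigcup_{k}D_{k}$ is dense and $\Int(S)=\emptyset$, I would find a domain $D=D_{k}$ whose circumscribed ball $B(p_{k},R_{k})$ lies inside $\Int(\sigma_{T})$: provided $\Int(\sigma_{T})$ is not contained in a single domain, one uses that a connected open set not contained in any $D_{k}$ must meet $S$, and since $\Int(S)=\emptyset$, a small enough ball inside $\Int(\sigma_{T})$ then contains a domain with even its circumscribed ball inside $\sigma_{T}$. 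And $\Int(\sigma_{T})$ is swallowed by one domain $E$ only for finitely many $T$: in that case both $E\supseteq\Int(\sigma_{T})$ and $g^{T}(E)\supseteq g^{T}(\Int\sigma_{T})$ contain a set of diameter $\asymp1$ inside $R$, so $E$ is one of the finitely many domains of large diameter, and if this happened for infinitely many $T$ then for a single such $E$ the pairwise-distinct domains $g^{T}(E)$ would infinitely often be large — impossible. Since $D\subseteq\sigma_{T}$, automatically $R_{k}\lesssim\lambda^{-T}$, so $D$ is tiny.

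Finally, apply $g^{T}$. By bounded distortion $g^{T}(D)$ has unstable extent $\gtrsim\lambda^{T}r_{k}/C_{0}$ and stable extent $\lesssim C_{0}\lambda^{-T}R_{k}\le C_{0}\beta\lambda^{-T}r_{k}$, so its circumscribed radius is $\gtrsim\lambda^{T}r_{k}$ while its inscribed radius is $\lesssim\lambda^{-T}r_{k}$; hence the eccentricity of $g^{T}(D)$ is $\gtrsim\lambda^{2T}/(C_{0}^{2}\beta)$. But $g^{T}(D)\in\DD$, so its eccentricity is $\le\beta$, forcing $\lambda^{2T}\le C_{0}^{2}\beta^{2}$, i.e.\ $T\le T_{0}$ for a fixed $T_{0}$ — contradicting that $T$ may be taken arbitrarily large outside a finite exceptional set. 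Therefore $h_{\textup{top}}(f)=0$. The main obstacle I anticipate is the analysis/combinatorics interface: pinning down the uniform distortion bound (and checking that the adapted horseshoe metric is comparable to $g$ on the compact set $R$, so that ``round'' in the two metrics differs only by a fixed factor absorbed into $C_{0}$ and $\beta$), together with the bookkeeping around the exceptional values of $T$; replacing the horseshoe by Pesin blocks of an ergodic hyperbolic measure and their local unstable manifolds is a natural alternative that meets the same two difficulties.
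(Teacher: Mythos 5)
Your proposal is essentially correct but takes a genuinely different route from the paper. The paper's argument is analytic/quasiconformal: it first shows that bounded geometry forces the complex dilatation $K_{f^n}$ to be uniformly bounded on $S$ (the complement of the domains), then uses the $C^\alpha$-H\"older continuity of the Beltrami coefficient $\mu_f$ and the composition rule to propagate this bound into the domains, with an error controlled by $C\sum_s \ell_s^\alpha$ where $\ell_s=\diam(D_s)$. Because the diameters form a null-sequence, this sum grows sublinearly, and Przytycki's integral formula $h_{\textup{top}}(f)\le\limsup\frac{1}{2n}\log\int_M K_{f^n}\,d\lambda$ then yields zero entropy. Your argument is instead purely dynamical: assume $h_{\textup{top}}(f)>0$, invoke Katok's horseshoe theorem for $C^{1+\alpha}$ surface diffeomorphisms, and derive a contradiction from the fact that a round domain placed in a thin Markov column $\sigma_T$ is stretched anisotropically by $g^T$ into an elongated set of eccentricity $\gtrsim\lambda^{2T}$, incompatible with $\beta$-bounded geometry of its image, which is again an element of $\DD$. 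Both routes use $\alpha>0$ as a summability hypothesis (the paper: summability of H\"older moduli of $\mu_f$ along wandering orbits; yours: bounded distortion in the horseshoe), and both would fail at $\alpha=0$, consistent with the paper's closing remark. The paper's route is constructive (a direct upper bound on entropy for all such $f$, with a quantitative estimate on $K_{f^n}$); yours is indirect but arguably more geometrically transparent, and it localizes the mechanism to a single horseshoe rather than to the global growth of dilatation.

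Two details in your sketch need firming up, both of which the paper handles carefully. First, the existence, for all large $T$, of a domain $D$ whose circumscribed ball fits inside $\Int(\sigma_T)$ requires more than density of $\bigcup_k D_k$ and $\Int(S)=\emptyset$: you need that near a point of $S$ one can find \emph{arbitrarily small} domains of the collection, not merely intersect some domain. This is exactly the content of the paper's Lemma~\ref{lem_acc}, which is nontrivial and uses Sierpi\'nski's theorem together with the null-sequence property of diameters; a claim such as ``a small enough ball contains a domain'' should be derived, not asserted. Second, the ``uniform bounded distortion'' you invoke should be stated as a two-dimensional estimate for the singular values of $Dg^T$ along the orbit of a set of diameter $\lesssim\lambda^{-T}$ (so that the distortion sum $\sum_i \lambda^{(i-T)\alpha}$ is geometric), rather than as a blanket distortion bound for $g^T$ on all of $\sigma_T$, whose intermediate iterates can have diameter comparable to $1$; since the domain $D$ you apply it to does satisfy $\diam(D)\lesssim\lambda^{-T}$, the argument survives, but the statement as written is stronger than what you use and than what is available. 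With these two points filled in, the proof closes.
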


The outline of the proof of Theorem A is as follows. First we show that the bounded geometry of the permuted domains, combined with their density in the surface, give bounds on the dilatation of $f$ on the complement of the union of the permuted domains. The differentiability assumptions on $f$ allow us to estimate the rate of growth of the dilatation on the whole surface $M$. Using a result by Przytycki~\cite{przy}, we show this rate of growth is slow enough so as to ensure the topological entropy of $f$ is zero.

\section{Entropy and diffeomorphisms with wandering domains}\label{sec_proof_wandering}

First, we study the relation between geometry of domains and the complex dilatation of a diffeomorphism.

\subsection{Geometry of domains and complex dilatation}\label{sub_sec_complex}

We denote $\lambda$ the measure associated to $g$ and $d\lambda$ the Riemannian volume form. By compactness of $M$, there 
exists a constant $\kappa >0$ such that 
\begin{equation}\label{eq_area}
\lambda(B(p,r)) = \int_{B(p,r)} d \lambda \geq \kappa r^2.
\end{equation}
where $B(p,r) \subset M$ is the ball centered at $p$ with radius $r < \iota(M)/2$. A sequence of positive real numbers $x_k$ is called a 
{\em null-sequence}, if for every given $\epsilon>0$ there exist only finitely elements of the sequence for which $x_k \geq \epsilon$. 
Henceforth, we denote $\ell_k := \diam ( D_k )$, the diameter of $D_k$ measured in $g$, with $D_k \in \DD$.

\begin{lem}\label{lem_null}
Let $(M,g)$ be a closed surface and let $\{ D_k \}_{k \in \Z}$ be a collection of mutually disjoint domains with bounded geometry. 
Then the sequence $\ell_k$ is a null-sequence.
\end{lem}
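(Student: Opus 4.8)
The plan is a straightforward volume–packing argument based on the area bound~\eqref{eq_area}. Since the domains $D_k$ are mutually disjoint and, by bounded geometry, each contains the metric ball $B(p_k,r_k)$, the balls $\{B(p_k,r_k)\}_{k\in\Z}$ are pairwise disjoint, so $\sum_{k\in\Z}\lambda(B(p_k,r_k))\le\lambda(M)<\infty$ by compactness of $M$. The only point requiring care is that~\eqref{eq_area} is valid only for radii below the injectivity radius scale, so I would first isolate the finitely many ``large'' domains.

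First I would show that only finitely many indices $k$ satisfy $r_k\ge\iota(M)/4$. For such $k$ one has $D_k\supseteq B(p_k,\iota(M)/4)$, and hence $\lambda(D_k)\ge\kappa(\iota(M)/4)^2=:c_0>0$ by~\eqref{eq_area}; since the $D_k$ are disjoint and $\lambda(M)<\infty$, the number of such $k$ is at most $\lambda(M)/c_0$. Next, for all the remaining indices (all but finitely many) we have $r_k<\iota(M)/4<\iota(M)/2$, so~\eqref{eq_area} applies and gives $\lambda(B(p_k,r_k))\ge\kappa r_k^2$. Combined with the disjointness bound above, this yields $\sum_k\kappa r_k^2\le\lambda(M)$, where the sum runs over those indices. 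Consequently, for every $\epsilon>0$ the set $\{k:\ r_k\ge\epsilon\}$ is finite, i.e.\ the sequence $\{r_k\}$ is a null-sequence.

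Finally I would transfer this to the diameters. The inclusion $D_k\subseteq B(p_k,R_k)$ gives $\ell_k=\diam(D_k)\le 2R_k$, and the bounded geometry condition $R_k/r_k\le\beta$ gives $R_k\le\beta r_k$, so $\ell_k\le 2\beta r_k$. Therefore $\{k:\ \ell_k\ge\epsilon\}\subseteq\{k:\ r_k\ge\epsilon/(2\beta)\}$, which is finite for every $\epsilon>0$; hence $\{\ell_k\}$ is a null-sequence, as claimed. I do not anticipate a serious obstacle here: the argument is elementary once the finitely many domains of macroscopic size are separated off so that the quadratic area estimate~\eqref{eq_area} can be applied to the rest.
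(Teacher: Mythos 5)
Your proof is correct and takes the same volume--packing approach as the paper: the disjoint domains each contain a metric ball of radius comparable to the domain's diameter, and the finite area of $M$ caps the number of domains whose diameter exceeds any given threshold. In fact you are slightly more careful than the paper's own (contradiction-style) proof, which applies the area lower bound~\eqref{eq_area} directly without checking the restriction $r<\iota(M)/2$ under which that bound was stated; your preliminary step discarding the finitely many domains with $r_k\ge\iota(M)/4$ patches that small gap.
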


\begin{proof}
Suppose, to the contrary, that $\{ D_k \}_{k \in \Z}$ is not a null-sequence. Then there exist an $\epsilon>0$ and an infinite subsequence 
$k_t$ such that $\diam( D_{k_t} ) \geq \epsilon$. By the bounded geometry property, we have that $\diam( D_{k_t} ) \leq \beta r_{k_t}$ and
therefore $r_{k_t} \geq \epsilon / \beta$. Therefore, by~\eqref{eq_area}, 
\[ \lambda (D_{k_t}) \geq \kappa r_{k_t}^2 \geq \frac{\kappa \epsilon^2}{\beta^2}, \]
for every $t \in \Z$. But this yields that 
\[ \sum_{t \in \Z} \lambda(D_{k_t}) = \infty, \] 
contradicting the fact that $\lambda(M) < \infty$ as $M$ is compact.
\end{proof}

Recall that $S$ is the complement of the union of the permuted domains, i.e. $S = M \setminus \bigcup_{k \in \Z} D_k$. 

\begin{lem}\label{lem_acc}
Let $f \in \homeo(M)$ permute a dense collection $\DD$ of domains with bounded geometry. For every $p \in S$, there exists a sequence of domains $D_{k_t}$ with $\diam(D_{k_t}) \ra 0$ for $t \ra \infty$ such that $D_{k_t} \ra p$.
\end{lem}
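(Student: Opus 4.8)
The plan is to observe that $f$ itself plays no role in the lemma: the proof uses only that $\bigcup_{k}D_k$ is dense in $M$, that $\ell_k = \diam(D_k)$ is a null-sequence (Lemma \ref{lem_null}, which is where bounded geometry enters), and that, by Definition \ref{defn_perm_dom}, the closures $\Cl(D_k)$ are pairwise disjoint with $\Int(\Cl(D_k)) = D_k$. So I would fix $p \in S$ and reduce the claim to the following: for every $\epsilon > 0$ there is a domain $D_k$ with $\diam(D_k) < \epsilon$ and $d(p, D_k) < \epsilon$. Granting this, taking $\epsilon = 1/t$ yields domains $D_{k_t}$ with $\diam(D_{k_t}) \to 0$ and $d(p, D_{k_t}) \to 0$, i.e. $D_{k_t} \to p$ (and since the diameters tend to $0$ the indices may be taken distinct).

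To produce such a $D_k$: given $\epsilon > 0$, by Lemma \ref{lem_null} the set $F_\epsilon := \{ k \in \Z : \diam(D_k) \geq \epsilon \}$ is finite, so $A_\epsilon := \bigcup_{k \in F_\epsilon} \Cl(D_k)$ is a closed subset of $M$. The key point I would establish is that $p \notin \Int(A_\epsilon)$: if some ball $B(p,\rho)$ lay inside $A_\epsilon$, then, $B(p,\rho)$ being connected and the $\Cl(D_k)$ being pairwise disjoint (so the sets $B(p,\rho) \cap \Cl(D_k)$, $k \in F_\epsilon$, form a finite partition of $B(p,\rho)$ into relatively closed pieces), we would get $B(p,\rho) \subseteq \Cl(D_{k_0})$ for a single $k_0 \in F_\epsilon$, hence $p \in \Int(\Cl(D_{k_0})) = D_{k_0}$, contradicting $p \in S$. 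Consequently, for each $\delta \in (0,\epsilon)$ the set $B(p,\delta) \setminus A_\epsilon$ is nonempty and open, so by density of $\bigcup_k D_k$ it meets some $D_k$; that $k$ cannot belong to $F_\epsilon$, whence $\diam(D_k) < \epsilon$, and since $D_k \cap B(p,\delta) \neq \emptyset$ also $d(p, D_k) < \delta < \epsilon$.

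The one step I expect to require genuine care is the claim $p \notin \Int(A_\epsilon)$ — equivalently, excluding the possibility that $p$ is ``shielded'' from every small domain because it sits on the boundary of one or more of the finitely many large domains. This is exactly what the normalization $\Int(\Cl(D_k)) = D_k$ together with the disjointness of the closures (both from Definition \ref{defn_perm_dom}) rule out; once that is in place, the finiteness supplied by Lemma \ref{lem_null} and the density hypothesis close the argument routinely.
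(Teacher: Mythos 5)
Your proof is correct, and it rests on exactly the same ingredients as the paper's: density of $\bigcup_k D_k$, the null-sequence property from Lemma~\ref{lem_null}, pairwise-disjoint closures, the normalization $\Int(\Cl(D_k)) = D_k$, and a connectedness argument on a small ball to show $p$ cannot be shielded by finitely many closures. Where you differ is organization. The paper first restricts to $p \in S \setminus \bigcup_k \partial D_k$ (invoking Sierpi\'nski's theorem to note this set is non-empty, an observation that is not actually needed for the lemma), proves that every neighbourhood of such a $p$ meets infinitely many domains, extracts small ones via the null-sequence, and then treats $p \in \partial D_k$ as a separate case. Your version eliminates the case split and the detour through ``infinitely many'': by collecting the finitely many large domains into the closed set $A_\epsilon$ and showing $p \notin \Int(A_\epsilon)$, you directly produce a single small domain near $p$ for each $\epsilon$, handling all $p \in S$ uniformly. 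The connectedness step ($B(p,\rho)$ being connected forces containment in a single $\Cl(D_{k_0})$, contradicting $p \in S$) is the same lemma-level fact the paper uses when arguing $U \setminus \Omega$ is non-empty; you have simply isolated it as the one step needing care, which is the right instinct. In short: same underlying mechanism, cleaner and more unified execution on your side.
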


\begin{proof}
Fix $p \in S$ and let $U \subset M$ be an open (connected) neighbourhood of $p$. First assume that $p \in S \setminus \bigcup_{k \in \Z} \partial D_k$. 
This set in non-empty, as otherwise the surface $M$ is a union of countably many mutually disjoint continua; but this contradicts Sierpi\'nski's Theorem, which states that no countable union of disjoint continua is connected. We claim that $U$ intersects infinitely many different elements of $\DD$. Indeed, if $U$ intersects only finitely many elements $D_{k_1},...,D_{k_m}$, then $\Omega:= \bigcup_{i=1}^m \Cl(D_{k_i})$ is closed. This implies that $U \setminus \Omega$ is open and non-empty, as otherwise $M$ would be a finite union of disjoint continua, which is impossible. However, as the union of the elements of $\DD$ is dense, $U \setminus \Omega $ can not be open. Thus, there are infinitely many distinct elements $D_{k_1},D_{k_2},...$ of $\DD$ that intersect $U$. Taking a sequence of nested open connected neighbourhoods $U_t$ containing $p$, we can find elements $D_{k_t} \subset U_{t} \setminus U_{t+1}$ for every $t \geq 1$. By Lemma~\ref{lem_null}, $\diam(D_{k_t})$ is a null-sequence and thus we obtain a sequence of domains $D_{k_t}$ with $\diam(D_{k_t}) \ra 0$ for $t \ra \infty$ 
such that $D_{k_t} \ra p$. 

As $\Int(\Cl(D_k)) = D_k$, given $p \in \partial D_k$ and given any neighbourhood $U \ni p$, $U$ has non-empty intersection with 
$M \setminus \Cl(D_k)$. By the same reasoning as above, $p$ is again is a limit point of arbitrarily small domains in the collection $\DD$. Thus we have proved the claim for all points $p \in S$ and this concludes the proof.
\end{proof}

Next, we turn to the {\em complex dilatation}\index{complex dilatation} of a diffeomorphism $f \in \diff(M)$ and its behaviour under 
compositions of diffeomorphisms, see e.g.~\cite{lehto}. We first consider the case where $f \in \diff(\C)$. 
The complex dilatation $\mu_f$ of $f$ is defined by
\begin{equation}
\mu_f \colon \C \ra \D^2, ~ \mu_f(p) = \frac{ f_{\bar{z}} }{ f_z}(p),
\end{equation}
and the corresponding differential 
\begin{equation}
\mu_f(p) \frac{d \bar{z}}{dz},
\end{equation}
is the {\em Beltrami differential}\index{beltrami differential} of $f$. The {\em dilatation}\index{dilatation} of $f$ is defined by 
\begin{equation}\label{eq_defn_dil_1}
K_f(p) = \frac{ 1 + | \mu_f(p) | }{ 1 - | \mu_f(p) | },
\end{equation}
which equals 
\begin{equation}\label{eq_defn_dil_2}
K_f(p) = \frac{ \max_{v} | Df_p(v) | }{ \min_{v} | Df_p(v)| },
\end{equation}
where $v$ ranges over the unit circle in $T_p\C$ and the norm $| \cdot |$ is induced by the standard (conformal) Euclidean metric $g$ 
on $\C$. Denote $[ \cdot, \cdot ]$ be the hyperbolic distance in $\D^2$, i.e. the distance induced by the Poincar\'e metric on $\D^2$. 
When one composes two diffeomorphisms $f,g \colon \C \ra \C$, then 
\begin{equation}\label{eq_comp}
\mu_{g \circ f}(p) = \frac{\mu_f(p) + \theta_f(p) \mu_g(f(p))}{ 1+ \overline{\mu_f(p)} \theta_f(p) \mu_g(f(p))},
\end{equation}
where $\theta_f(p) = \frac{\overline{f_{z}}}{f_{z}}(p)$. It follows that
\begin{equation}\label{eq_comp_iterates}
\mu_{f^{n+1}}(p) = \frac{\mu_f(p) + \theta_f(p) \mu_{f^n}(f(p))}{1+ \overline{\mu_f(p)} \theta_f(p) \mu_{f^n} (f(p))}. 
\end{equation}
We can rewrite~\eqref{eq_comp} as
\begin{equation}\label{eq_comp_iterates_2}
\mu_{g \circ f}(p) = T_{\mu_f(p)} ( \theta_f(p) \mu_{g}(f(p)) )
\end{equation}
where 
\begin{equation}\label{eq_mobius}
T_a(z) = \frac{a+z}{1+ \bar{a} z} \in \Mob(\D^2)
\end{equation}
is an isometry relative to the Poincar\'e metric, for a given $a \in \D^2$. Further, the following relation holds
\begin{equation}\label{eq_dil_mu}
\log ( K_{g \circ f^{-1}} (f(p)) ) = \left[ \mu_g(p), \mu_f(p) \right].
\end{equation}

To define the complex (and maximal) dilatation of a diffeomorphism of a surface $M$, we first lift $f \colon M \ra M$ to the universal cover 
$\widetilde{f} \colon \widetilde{M} \ra \widetilde{M}$ and denote $\pi \colon \widetilde{M} \ra M$ be the corresponding canonical projection mapping,
where $M = \widetilde{M} \slash \Gamma$, with $\Gamma$ a Fuchsian group. We assume here that $\widetilde{M}$ is either $\C$ or $\D^2$, the trivial case of the sphere $\PP^1$ is excluded here. As $\pi$ is an analytic local diffeomorphism, $\widetilde{f}$ is a diffeomorphism. Further, as $M$ is compact, $f$ is $K$-quasiconformal on $M$ for some $K \geq 1$ and thus $\widetilde{f}$ is $K$-quasiconformal on $\widetilde{M}$. 
Since $\widetilde{f} \circ h \circ \widetilde{f}^{-1}$ is conformal for every $h \in \Gamma$, it follows from~\eqref{eq_comp} that
\begin{equation}
\mu_{\widetilde{f}}(p) = \mu_{\widetilde{f}}(h(p)) \frac{\overline{h_z}}{h_z}(p).
\end{equation}
In other words, $\mu_{\widetilde{f}}$ defines a Beltrami differential on $\widetilde{M}$ for the group $\Gamma$, or equivalently, it defines
a Beltrami differential for $f$ on the surface $M$. Furthermore, the same formulas~\eqref{eq_defn_dil_1} and~\eqref{eq_defn_dil_2}, defined relative to the canonical (conformal) metric defined on $M$, hold for the dilatation $K_f$ of $f$ on $M$.

\medskip

The following lemma shows that the bounded geometry assumption of the domains has a strong effect on the dilatation of iterates of $f$ 
on $S$. We say $f$ has {\em uniformly bounded dilatation} on $S \subset M$, if $K_{f^n}(p)$ is bounded by a constant independent 
of $n \in \Z$ and $p \in S$. 

\begin{lem}[Bounded dilatation]\label{lem_bounded_dil}
Let $f \in \diff^1(M)$ permute a dense collection of domains $\DD$. If the collection $\DD$ has bounded geometry, then $f$ has uniformly bounded dilatation on $S$. 
\end{lem}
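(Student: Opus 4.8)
The plan is to exploit the density and bounded geometry of the permuted domains $\DD$ together with the $C^1$ regularity of $f$ to show that $\mu_{f^n}$ is bounded away from the boundary $\partial \D^2$ uniformly in $n$, hence $K_{f^n}$ is uniformly bounded on $S$. First I would observe that on each domain $D_k$ the iterates $f^n$ are constrained geometrically: since $f^n(D_k)$ is some other domain $D_{k'}$ and both have bounded geometry (controlled by the same $\beta$), the map $f^n$ restricted to $D_k$ is close to a similarity in a quantified sense. More precisely, the round ball $B(p_k,r_k)\subseteq D_k$ is mapped into $D_{k'} \subseteq B(p_{k'},R_{k'})$, while $B(p_k,R_k) \supseteq D_k$ maps onto $D_{k'} \supseteq B(p_{k'},r_{k'})$; combining these inclusions with $R/r \le \beta$ shows that the image of the unit ball under the affine part of $f^n$ (up to a fixed scale) is squeezed between two concentric balls with ratio bounded by $\beta^2$. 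This is a Mori-type argument: for a $K$-quasiconformal map one has distortion estimates, but here we get a bound on the \emph{eccentricity} of images of balls independent of $n$.

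Next I would pass from this macroscopic control to a pointwise bound on the dilatation at points $p \in S$. By Lemma~\ref{lem_acc}, every $p \in S$ is the limit of domains $D_{k_t}$ with $\diam(D_{k_t}) \ra 0$. Fix $n$ and consider $f^n$ near $p$. Since $f \in \diff^1(M)$, the iterate $f^n$ is differentiable at $p$, and $Df^n_p$ is well approximated on the tiny domain $D_{k_t}$ by its value at $p$: writing $f^n$ in the exponential charts at $p$ and at $f^n(p)$, for $t$ large the restriction of $f^n$ to $D_{k_t}$ differs from the linear map $Df^n_p$ by an error that is $o(\diam D_{k_t})$. Now $D_{k_t}$ is a ball up to eccentricity $\beta$, and its image $f^n(D_{k_t}) = D_{k'_t}$ is again a ball up to eccentricity $\beta$; hence $Df^n_p$ must carry balls to sets of eccentricity at most, say, $\beta^2$ (absorbing the $o(\cdot)$ error by letting $t \ra \infty$ with $n$ fixed). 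By~\eqref{eq_defn_dil_2}, this means $K_{f^n}(p) \le \beta^2$ — a bound independent of $n$ and $p$. (One must handle both signs of $n$, which is symmetric since $f^{-1}$ permutes the same collection, and check that the constant can be taken uniform; this is where a little care with the charts and the compactness of $M$, giving uniform control on the exponential map and on $\|Df\|$, is needed.)

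The main obstacle I expect is making the approximation step rigorous with a constant that is genuinely uniform in $n$: a priori the modulus of continuity of $Df^n$ degrades with $n$, so one cannot fix a single scale below which the linearization is accurate for all $n$. The resolution is that we do not need a uniform scale — for each fixed $n$ we are free to send $t \ra \infty$, choosing domains $D_{k_t}$ arbitrarily small (which exist by Lemma~\ref{lem_acc} and whose diameters form a null-sequence by Lemma~\ref{lem_null}), so the error in linearizing $f^n$ on $D_{k_t}$ tends to $0$ while the eccentricity bound $\beta$ on $D_{k_t}$ and on its image persists. Taking the limit then yields $K_{f^n}(p) \le \beta^2$ with no dependence on $n$. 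A secondary point to verify is that the bounded-geometry inclusions transfer correctly through the exponential charts: since all radii $r_k, R_k$ involved tend to $0$ and the metric $g$ is smooth, the distortion of $\exp_p$ and $\exp_{f^n(p)}$ on these shrinking balls is $1 + o(1)$, so it does not affect the eccentricity bound in the limit. Assembling these observations gives the uniform bound on $K_{f^n}$ over $S$ asserted by the lemma.
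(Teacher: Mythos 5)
Your proposal follows essentially the same line of argument as the paper: fix $n$ and $p\in S$, use Lemma~\ref{lem_acc} to produce a sequence of $\beta$-bounded-geometry domains $D_{k_t}\to p$ with $\diam(D_{k_t})\to 0$, observe that $f^n(D_{k_t})$ also has $\beta$-bounded geometry, and use $C^1$ linearization at $p$ (with the linearization error vanishing as $t\to\infty$ at fixed $n$) to force $K_{f^n}(p)\le\beta^2$, uniformly in $n$ and $p$. The points you flag as potential obstacles — the $n$-dependence of the modulus of continuity of $Df^n$, which is harmless because $t\to\infty$ is taken with $n$ fixed, and the uniform $1+o(1)$ distortion of exponential charts on shrinking balls — are precisely the implicit steps in the paper's terse proof, so your write-up is, if anything, slightly more careful.
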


\begin{proof}
Suppose the collection of domains $\DD = \{D_k\}_{k \in \Z}$ has $\beta$-bounded geometry for some $\beta \geq 1$. Fix $N \in \Z$ and $p \in S$ and take a small open neigbhourhood $U \subset M$ containing $p$. By Lemma~\ref{lem_acc}, there exists a subsequence of domains $D_{k_t}$, where $|k_t| \ra \infty$ and $\diam(D_{k_t}) \ra 0$ for $t \ra \infty$ and such that $D_{k_t} \ra p$. Denote $q = f^N(p) \in S$. We may as well assume that for all $t \geq 1$ the domains $D_{k_t}$ are contained in $U$. Define $D'_{k_t}:=f^N(D_{k_t})$. If we denote $U' = f^N(U)$, then the sequence $D'_{k_{t}}$ converges to $q$ and $D'_{k_{t}} \subset U'$. By the bounded geometry assumption, for every $t \geq 1$, there exists $p_t \in D_{k_t}$ and $0 < r_t \leq R_t$ such that 
\begin{equation*}
B(p_t, r_t) \subseteq D_{k_t} \subseteq B(p_t, R_t) 
\end{equation*}
with $R_k / r_k \leq \beta$. As $f \in \diff^1(M)$, the local behaviour of $f^N$ around $q$ converges to the behaviour of the linear map
$Df^N_{q}$. In particular, if we take $p_t \in D_{k_t}$, then $p_t \ra p$ and thus $q_t :=f^N(q_t) \ra q$, and in order for all $D'_{k_t}$ 
to have $\beta$-bounded geometry, it is required that
\[ K_{f^N}(p)  \leq \frac{R\beta}{r}. \]
Indeed, this is easily seen to hold if the map acts locally by a linear map and is thus sufficient as $f \in \diff^1(M)$ and the 
increasingly smaller domains approach $q$. As $R / r \leq \beta$, we must therefore have $K_{f^N}(p) \leq \beta^2$. As this argument holds for 
every (fixed) $N \in \Z$ and every $p \in S$, we find $\beta^2$ the uniform bound on the dilatation on $S$.
\end{proof}

Our smoothness assumptions on $f$ allow us to give bounds on the (complex) dilatation of iterates of $f$ on $M$ in terms of the 
diameters of the permuted domains. 

\begin{lem}[Sum of diameters]\label{lem_sum_diam}
Let $f \in \diff^{1+\alpha}(M)$, with $\alpha > 0$, which permutes a collection of domains $\DD = \{D_k\}_{k \in \Z}$ with $\beta$-bounded geometry.
Then there exists a constant $C = C(\beta) > 0$ such that, if $p \in D_t$ (for some $t \in \Z$) and $q \in \partial D_t$, then
\begin{equation}\label{eq_sum_diam}
\left[ \mu_{f^{n+1}}(p), \mu_{f^{n+1}}(q) \right] \leq C \cdot \sum_{s=t}^{t+n} \ell_s^{\alpha},
\end{equation}
where the domains are labeled such that $f^s(D_t) = D_{t+s}$.
\end{lem}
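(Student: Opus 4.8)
The plan is to track how the complex dilatation $\mu_{f^{n+1}}(p)$ moves in the Poincar\'e metric on $\D^2$ as we increment $n$, using the composition formula~\eqref{eq_comp_iterates} and the fact that $T_a$ is a hyperbolic isometry. Writing out~\eqref{eq_comp_iterates_2} for both base points $p$ and $q$, we have $\mu_{f^{n+1}}(p) = T_{\mu_f(p)}(\theta_f(p)\mu_{f^n}(f(p)))$ and similarly for $q$. The idea is an inductive/telescoping estimate: the hyperbolic distance $[\mu_{f^{n+1}}(p),\mu_{f^{n+1}}(q)]$ should be controlled by the distance $[\mu_{f^n}(f(p)),\mu_{f^n}(f(q))]$ (coming from the ``tail'' of the orbit, which lives in the domains $D_{t+1},\dots,D_{t+n}$, hence is controlled by $\sum_{s=t+1}^{t+n}\ell_s^\alpha$ by the inductive hypothesis) plus a single new error term of size $O(\ell_t^\alpha)$ coming from the first step of the orbit near $D_t$.

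I would carry this out in the following order. First, reduce to $\C$ or $\D^2$ via the universal cover as set up in the excerpt, so that formulas~\eqref{eq_comp}--\eqref{eq_dil_mu} apply; since $f$ is $K$-quasiconformal all the $\mu$'s lie in a fixed compact subdisk of $\D^2$, on which the Poincar\'e metric is comparable to the Euclidean one and all M\"obius maps $T_a$, $a$ in that compact set, are uniformly bi-Lipschitz. Second, establish the one-step estimate: because $p\in D_t$ and $q\in\partial D_t$ lie in a common domain of diameter $\ell_t$, and $f\in\diff^{1+\alpha}$ so that $\mu_f$ is $\alpha$-H\"older in the local conformal chart with a constant uniform over $M$ by compactness, we get $[\mu_f(p),\mu_f(q)]\leq C_1\ell_t^\alpha$ and likewise $|\theta_f(p)-\theta_f(q)|\leq C_1\ell_t^\alpha$ (here $\theta_f$ is a unit-modulus quantity, also $\alpha$-H\"older since $f_z$ is). Third, combine: using the triangle inequality in the Poincar\'e metric together with the isometry property of $T_a$ and the uniform bi-Lipschitz bounds,
\begin{equation*}
[\mu_{f^{n+1}}(p),\mu_{f^{n+1}}(q)] \leq C_2\,[\mu_f(p),\mu_f(q)] + C_2\,\bigl|\theta_f(p)\mu_{f^n}(f(p)) - \theta_f(q)\mu_{f^n}(f(q))\bigr|,
\end{equation*}
and the second term splits, via $|\theta_f(p)-\theta_f(q)|\leq C_1\ell_t^\alpha$ and $|\mu_{f^n}|<1$, into $C_3\ell_t^\alpha + C_3\,[\mu_{f^n}(f(p)),\mu_{f^n}(f(q))]$. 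Fourth, run the induction: $f(p)\in D_{t+1}$ and $f(q)\in\partial D_{t+1}$ (since $f$ maps $D_t$ to $D_{t+1}$ and boundary to boundary), so the inductive hypothesis applied to the orbit starting at $D_{t+1}$ gives $[\mu_{f^n}(f(p)),\mu_{f^n}(f(q))]\leq C\sum_{s=t+1}^{t+n}\ell_s^\alpha$; the base case $n=0$ is just the one-step H\"older estimate $[\mu_f(p),\mu_f(q)]\leq C_1\ell_t^\alpha$. Choosing $C=C(\beta)$ large enough to absorb all the constants $C_1,C_2,C_3$ (which depend only on $K$, hence on $\beta$ via Lemma~\ref{lem_bounded_dil}, and on $M$, $\alpha$) closes the induction and yields~\eqref{eq_sum_diam}.

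The main obstacle I anticipate is making the ``one-step'' H\"older estimate genuinely uniform and genuinely $\alpha$-H\"older \emph{across domains of varying size and location}: one must pass to the universal cover and to local conformal coordinates, check that the $C^{1+\alpha}$ norm of the lift $\widetilde f$ is globally bounded (which follows from compactness of $M$ and $\Gamma$-equivariance), and confirm that the $\alpha$-H\"older modulus of $\mu_{\widetilde f}$ and of $\theta_{\widetilde f}$ in these coordinates is bounded independently of which domain $D_t$ we are looking at. A secondary technical point is controlling the chain rule constants in~\eqref{eq_comp}: the Poincar\'e-metric bi-Lipschitz constants for $z\mapsto T_{\mu_f(p)}(z)$ and for the map $z\mapsto\theta_f(p)z$ must be bounded uniformly in $p$, which is exactly where the \emph{a priori} bound $|\mu_f|\leq(K-1)/(K+1)<1$ from quasiconformality is used. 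Once these uniformities are in hand, the telescoping is routine, and Lemma~\ref{lem_null} guarantees the resulting sum of $\ell_s^\alpha$ has terms tending to zero, which is what the subsequent entropy argument will exploit.
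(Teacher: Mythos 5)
Your plan — reduce to the composition formula~\eqref{eq_comp_iterates_2}, telescope via the triangle inequality using the isometry property of $T_a$ and of rotations, and control each residual term by the H\"older modulus of $\mu_f$ and $\theta_f$ — is exactly the paper's (Lemmas~\ref{lem_prop_1}, \ref{lem_mobius}, then the proof of Lemma~\ref{lem_sum_diam}); you phrase it as a downward induction on $n$ while the paper writes the telescoped sum out in one go, but these are the same argument.

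There is, however, one genuine gap. You write that ``since $f$ is $K$-quasiconformal all the $\mu$'s lie in a fixed compact subdisk of $\D^2$''. That is true only for $\mu_f$ itself: iterates $f^n$ are in general only $K^n$-quasiconformal, so $|\mu_{f^n}|$ has no $n$-independent upper bound $<1$ in general. The uniform bound one actually needs — namely that $\mu_{f^{n-s}}(q_{s+1})$ stays inside the compact disk $B_\delta$, $\delta=(\beta^2-1)/(\beta^2+1)$ — is supplied by Lemma~\ref{lem_bounded_dil} (bounded dilatation on $S$), and it applies \emph{only because} $q_{s+1}=f^{s+1}(q)\in\partial D_{t+s+1}\subset S$, i.e.\ because $q$ is a \emph{boundary} point. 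This is precisely where the bounded-geometry hypothesis enters, and the telescoping must be arranged so that the ``inner'' argument of $\mu_{f^{n-s}}$ in every residual term is an iterate of $q$, never of $p\in D_t$ (whose iterates leave $S$, so Lemma~\ref{lem_bounded_dil} gives no bound there). Concretely, in the paper's Lemma~\ref{lem_prop_1} the intermediate point at each step is $T_{\mu_f(p_s)}(\theta_f(p_s)\mu_{f^{n-s}}(q_{s+1}))$ — carrying $q_{s+1}$, not $p_{s+1}$ — and this choice is forced. Your displayed inequality, containing $\theta_f(p)\mu_{f^n}(f(p))$, does not make this choice visible, and if one splits through the wrong intermediate (carrying $f(p)$ rather than $f(q)$) the estimate breaks down: the Lipschitz constant in $[T_a(z),T_b(z)]\le C_1[a,b]$ and the rotation estimate $[\theta z,\theta' z]\lesssim|\theta-\theta'|$ both blow up as $|z|\to 1$, and without Lemma~\ref{lem_bounded_dil} restricted to $S$ there is no control on $|z|$. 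A related small confusion: $T_a$ and rotations are Poincar\'e \emph{isometries}, so their bi-Lipschitz constants are trivially $1$; what requires the compactness input is the Lipschitz dependence of $T_a(z)$ on the parameter $a$ (and of $e^{i\phi}z$ on $\phi$) uniformly in $z\in B_\delta$. You do cite Lemma~\ref{lem_bounded_dil} in passing, so the ingredient is on your radar, but as written the proof misattributes the key compactness to $K$-quasiconformality of $f$ and omits the point that only the $q$-orbit stays in $S$; both need to be corrected for the argument to close.
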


To prove Lemma~\ref{lem_sum_diam}, we use the following. 

\begin{lem}\label{lem_prop_1}
Let $f \in \diff^{1}(M)$ and $p_0, q_0 \in M$. Then 
\begin{equation}\label{eq_mobius_1}
\left[ \mu_{f^{n+1}}(p_0), \mu_{f^{n+1}}(q_0) \right] \leq 
\sum_{s=0}^{n} \left[  T_{\mu_f(p_s)} ( \theta_f(p_s) \mu_{f^{n-s}}(q_{s+1}) ), T_{\mu_f(q_s)} ( \theta_f(q_s) \mu_{f^{n-s}}(q_{s+1}) ) \right],
\end{equation} 
where $p_s = f^s(p_0)$ and $q_s = f^s(q_0)$.
\end{lem}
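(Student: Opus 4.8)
The plan is to prove the inequality by a telescoping argument along the orbit, interpolating between $\mu_{f^{n+1}}(p_0)$ and $\mu_{f^{n+1}}(q_0)$ through a chain of intermediate quantities, one for each time step $s=0,\dots,n$, and then controlling each consecutive difference by the triangle inequality for the hyperbolic metric $[\cdot,\cdot]$ on $\D^2$.

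First I would set up the basic identity. Using the composition rule in the form~\eqref{eq_comp_iterates_2}, for any point $x_0$ with $x_s = f^s(x_0)$ one has
\begin{equation*}
\mu_{f^{n+1}}(x_0) = \mu_{f^{n-s}\circ f^{s+1}}(x_0) = T_{\mu_{f^{s+1}}(x_0)}\bigl(\theta_{f^{s+1}}(x_0)\,\mu_{f^{n-s}}(x_{s+1})\bigr),
\end{equation*}
and more to the point, writing $f^{n+1} = f^{n-s}\circ f \circ f^{s}$ and applying~\eqref{eq_comp_iterates_2} to the outer composition $f^{n-s+1}\circ f^s$ at the inner stage, one gets for each $s$ a representation of $\mu_{f^{n+1}}(x_0)$ that, as a function of the ``tail'' $\mu_{f^{n-s}}(x_{s+1})$ evaluated along the $x$-orbit, is obtained by applying the map $w\mapsto \Phi_s^{x}(w):=T_{\mu_f(x_s)}(\theta_f(x_s)\,w)$ followed by the conformal bookkeeping coming from the first $s$ steps. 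The key structural point is that the composition of the first $s$ steps contributes a \emph{Möbius isometry} of $(\D^2,[\cdot,\cdot])$ (this is exactly the content of~\eqref{eq_comp} and~\eqref{eq_mobius}: each $T_a$ is an isometry, and the factor $\theta$ has modulus one so it acts by a rotation fixing $0$, again an isometry). Hence, denoting by $g_s^{x}$ this isometry, we have $\mu_{f^{n+1}}(x_0) = g_s^{x}\bigl(\Phi_s^{x}(\mu_{f^{n-s}}(x_{s+1}))\bigr)$ for each $s$, with the conventions that at $s=n$ the tail is $\mu_{f^0}\equiv 0$ and at $s=0$ the prefactor isometry is the identity.

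Next I would telescope. Since $[\cdot,\cdot]$ is a metric and each $g_s$ is an isometry, I bound $[\mu_{f^{n+1}}(p_0),\mu_{f^{n+1}}(q_0)]$ by a sum over $s$ of terms of the form
\begin{equation*}
\left[ g_s^{p}\bigl(\Phi_s^{p}(\mu_{f^{n-s}}(q_{s+1}))\bigr),\; g_s^{q}\bigl(\Phi_s^{q}(\mu_{f^{n-s}}(q_{s+1}))\bigr)\right],
\end{equation*}
where at each consecutive pair the tail has been switched from the $p$-orbit to the $q$-orbit — this is why the right-hand side of~\eqref{eq_mobius_1} has $\mu_{f^{n-s}}(q_{s+1})$ (the $q$-orbit tail) everywhere. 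Concretely: start from $\mu_{f^{n+1}}(p_0)$ written with the $s=0$ representation and tail $\mu_{f^{n+1}}(p_1)$ along the $p$-orbit; replace the tail successively, at stage $s$ swapping the evaluation point of the remaining tail from $p_{s+1}$ to $q_{s+1}$, and absorbing the already-processed steps into an isometry. The point is that after the swap at stage $s$, the remaining $n-s$ steps can be taken along the $q$-orbit, and the prefactor isometries built from the first $s$ steps of the $p$-orbit (resp.\ $q$-orbit on the two sides of the $s$-th difference) drop out by the isometry property, leaving exactly the $s$-th summand on the right of~\eqref{eq_mobius_1}, namely $[T_{\mu_f(p_s)}(\theta_f(p_s)\,\mu_{f^{n-s}}(q_{s+1})),\,T_{\mu_f(q_s)}(\theta_f(q_s)\,\mu_{f^{n-s}}(q_{s+1}))]$. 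Summing over $s=0,\dots,n$ gives~\eqref{eq_mobius_1}.

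The main obstacle I anticipate is purely bookkeeping: making the telescoping precise requires carefully identifying, at each stage, which Möbius isometry is being peeled off and verifying that it is genuinely the \emph{same} isometry on both sides of the relevant triangle-inequality step, so that it can be cancelled. This hinges on the observation that in~\eqref{eq_comp} the dependence of $\mu_{g\circ f}(p)$ on $\mu_g$ is through $T_{\mu_f(p)}\circ(\text{rotation by }\theta_f(p))$, a fixed isometry once the \emph{first} map $f$ and the point $p$ are fixed; iterating, the contribution of the first $s$ steps is a fixed isometry depending only on the orbit segment $p_0,\dots,p_s$, independent of the tail. Once this is isolated cleanly, the estimate follows formally from the triangle inequality and invariance of $[\cdot,\cdot]$ under $\Mob(\D^2)$; no analytic estimates are needed at this stage (those enter only in Lemma~\ref{lem_sum_diam}, via the $C^{1+\alpha}$ hypothesis bounding each individual summand).
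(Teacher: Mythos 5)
Your telescoping argument is correct and is essentially the unrolled form of the paper's proof: the paper peels off the $s=0$ summand by one application of the triangle inequality, uses the isometry of $T_a$ and rotations to reduce the remainder to $\left[\mu_{f^n}(p_1),\mu_{f^n}(q_1)\right]$, and then invokes induction, which is exactly your chain made recursive. One small caution on your phrasing near the end: the prefactor isometry that cancels at the $s$-th difference is built from the first $s$ steps along the \emph{same} orbit (the $p$-orbit) on both sides, not the $p$-orbit on one side and the $q$-orbit on the other — you do flag this as the key point to check, and it is what makes the cancellation go through.
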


\begin{proof}
Using~\eqref{eq_comp_iterates_2}, we write
\begin{equation*} 
\left[ \mu_{f^{n+1}}(p_0), \mu_{f^{n+1}}(q_0) \right] = 
\left[  T_{\mu_f(p_0)} ( \theta_f(p_0) \mu_{f^n}(p_1) ), T_{\mu_f(q_0)} ( \theta_f(q_0) \mu_{f^n}(q_1) ) \right].
\end{equation*}
By the triangle inequality, we thus have the following inequality
\begin{eqnarray*}
\left[ \mu_{f^{n+1}}(p_0), \mu_{f^{n+1}}(q_0) \right] & \leq &
\left[  T_{\mu_f(p_0)} ( \theta_f(p_0) \mu_{f^n}(p_1) ), T_{\mu_f(p_0)} ( \theta_f(p_0) \mu_{f^n}(q_1) ) \right] \\ &+& 
\left[  T_{\mu_f(p_0)} ( \theta_f(p_0) \mu_{f^n}(q_1) ), T_{\mu_f(q_0)} ( \theta_f(q_0) \mu_{f^n}(q_1) ) \right]. 
\end{eqnarray*}
As both $T_a$ (as defined by~\eqref{eq_mobius}) and rotations are isometries in the Poincar\'e disk, we have that 
\begin{equation*}
\left[ T_{\mu_f(p_0)} ( \theta_f(p_0) \mu_{f^n}(p_1) ), T_{\mu_f(p_0)} ( \theta_f(p_0) \mu_{f^n}(q_1) ) \right]
= \left[ \mu_{f^n}(p_1), \mu_{f^n}(q_1) \right].
\end{equation*}
Inequality~\eqref{eq_mobius_1} now follows by induction.
\end{proof}

As $\partial D_t \subset S$, by Lemma~\ref{lem_bounded_dil}, $\mu_{f^{n-s}}(q_{s+1}) \in B_{\delta}$, with 
$B_{\delta} \subset \D^2$ the compact hyperbolic disk centered at $0 \in \D^2$ with radius 
\begin{equation}
\delta = \frac{ \beta^2-1 }{ \beta^2+1 }.
\end{equation}
Further, define
\begin{equation} 
\delta' = \sup_{p \in M} |\mu_f(p)| < 1,
\end{equation}
and let $B_{\delta'} \subset \D^2$ be the compact hyperbolic disk centered at $0 \in \D^2$ and radius $\delta'$. 

\begin{lem}\label{lem_mobius}
There exists a constant $C_1(\delta, \delta')$ such that 
\begin{equation}\label{eq_mobius_a_b}
\left[ T_{a}(z), T_{b}(z) \right] \leq C_1 \left[ a, b \right],
\end{equation}
for given $a, b \in B_{\delta'}$ and $z \in B_{\delta}$.
\end{lem}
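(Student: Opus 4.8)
The plan is to fix $z \in B_\delta$ and show that the map $F_z \colon B_{\delta'} \to \D^2$ defined by $F_z(a) = T_a(z)$ is Lipschitz with respect to the Poincar\'e metric $[\cdot,\cdot]$ on both its domain and its target, with a Lipschitz constant $C_1$ depending only on $\delta$ and $\delta'$. Granting this, note that $B_{\delta'}$ is a hyperbolic disk centred at $0$ and hence hyperbolically convex, so the geodesic joining $a$ to $b$ stays inside $B_{\delta'}$; integrating the Lipschitz bound along this geodesic yields $[T_a(z), T_b(z)] = [F_z(a), F_z(b)] \le C_1 [a,b]$, which is~\eqref{eq_mobius_a_b}. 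So it suffices to bound, uniformly in $a \in B_{\delta'}$ and $z \in B_\delta$, the operator norm of the differential $DF_z(a)$, viewed as a real-linear map $T_a \D^2 \to T_{F_z(a)}\D^2$, with respect to the Poincar\'e metric.

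I would first estimate the Euclidean operator norm of $DF_z(a)$. Since $T_a(z) = (a+z)/(1+\bar a z)$ is not holomorphic in $a$, I split $DF_z(a)$ into its holomorphic and antiholomorphic parts, $\partial_a T_a(z) = 1/(1+\bar a z)$ and $\partial_{\bar a} T_a(z) = -(a+z)z/(1+\bar a z)^2$, so that the Euclidean operator norm of $DF_z(a)$ is at most $|\partial_a T_a(z)| + |\partial_{\bar a} T_a(z)|$. Using $|1+\bar a z| \ge 1 - |a||z| \ge 1 - \delta\delta' > 0$ together with $|a+z| \le \delta + \delta'$ and $|z| \le \delta$, this is at most an explicit constant $M_0 = M_0(\delta,\delta')$.

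To pass to the Poincar\'e metric, write $\rho(w) = 2/(1-|w|^2)$ for its density; then the hyperbolic operator norm of $DF_z(a)$ is at most $(\rho(F_z(a))/\rho(a)) M_0$, and since $\rho(a) \ge 2$ it remains only to bound $\rho(T_a(z))$ from above, i.e.\ to keep $|T_a(z)|$ uniformly away from $1$. This is the one step where the compactness of the parameter sets is essential, and it needs some care: the naive estimate $|T_a(z)| \le (|a|+|z|)/(1-|a||z|)$ is useless here, since its right-hand side can exceed $1$ once $\delta$ and $\delta'$ are close to $1$ (which occurs for large $\beta$). Instead I would exploit that $T_a \in \Mob(\D^2)$ is a hyperbolic isometry with $T_a(0) = a$: by the triangle inequality, $[0, T_a(z)] \le [0,a] + [a, T_a(z)] = [0,a] + [0,z] \le [0,\delta] + [0,\delta']$, which bounds $[0, T_a(z)]$ by a constant depending only on $\delta$ and $\delta'$ and hence forces $|T_a(z)| \le \delta'' := (\delta+\delta')/(1+\delta\delta')$, where $\delta'' < 1$ since $(1-\delta)(1-\delta') > 0$. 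Then $\rho(T_a(z)) \le 2/(1-(\delta'')^2)$, and combining gives the uniform bound $C_1(\delta,\delta') := M_0/(1-(\delta'')^2)$ on the hyperbolic operator norm of $DF_z(a)$. Apart from this uniform containment of $|T_a(z)|$ inside the disk, which is the only genuine obstacle, the argument is a routine differentiation together with a length estimate along a geodesic.
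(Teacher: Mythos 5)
Your proof is correct, but it takes a genuinely different route from the paper. The paper first invokes compactness to get a constant $\delta''<1$ with $[T_a(z),0]\le\delta''$, then observes that the Euclidean and hyperbolic metrics are equivalent on the resulting compact disk, which reduces the claim to a Euclidean Lipschitz estimate $|T_a(z)-T_b(z)|\le C_1'|a-b|$; that estimate is obtained by an explicit algebraic computation of the difference
\[
T_a(z)-T_b(z)=\frac{(a-b)+(a\bar b-\bar a b)z+(\bar b-\bar a)z^2}{(1+\bar a z)(1+\bar b z)},
\]
together with a separate geometric bound $|a\bar b-\bar a b|\le 2\delta'|a-b|$. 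You instead differentiate: you bound the hyperbolic operator norm of $D_a\bigl(T_a(z)\bigr)$ uniformly in $a\in B_{\delta'}$, $z\in B_\delta$, and integrate along the geodesic from $a$ to $b$, using geodesic convexity of the centred disk $B_{\delta'}$. Your route has two small advantages: the Wirtinger splitting $\partial_a T_a(z)=1/(1+\bar a z)$, $\partial_{\bar a}T_a(z)=-(a+z)z/(1+\bar a z)^2$ gives cleaner uniform bounds than the secant computation, and your isometry argument $[0,T_a(z)]\le[0,a]+[0,z]$ produces the sharp explicit constant $\delta''=(\delta+\delta')/(1+\delta\delta')$ rather than an unspecified one from compactness. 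You also correctly flag that the naive bound $|T_a(z)|\le(|a|+|z|)/(1-|a||z|)$ is useless since it can exceed $1$; the paper sidesteps this with compactness, while you resolve it via the isometry property $T_a(0)=a$, $[a,T_a(z)]=[0,z]$. Both proofs ultimately rest on the same essential point, namely keeping $T_a(z)$ uniformly away from $\partial\D^2$.
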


\begin{proof}
First we observe that there exists a constant $0 < \delta'' < 1$ (depending only on $\delta$ and $\delta'$), such that
$[T_{a}(z),0] \leq \delta''$, for every $a \in B_{\delta'}$ and every $z \in B_{\delta}$, as the disks $B_{\delta}, B_{\delta'} \subset \D^2$ are compact. Define $\bar{\delta} = \max\{ \delta, \delta', \delta'' \}$ and $B_{\bar{\delta}} \subset \D^2$ the compact disk with center $0 \in \D^2$ and radius $\bar{\delta}$.

As the Euclidean metric and the hyperbolic metric are equivalent on the compact disk $B_{\bar{\delta}}$, it suffices to show that there exists
a constant $C'_1(\bar{\delta})$ such that
\begin{equation}\label{eq_mobius_eucl}
\left| T_{a}(z) - T_{b}(z) \right| \leq C_1' \left| a - b \right|,
\end{equation}
where $| z - w |$ denotes the Euclidean distance between two points $z,w \in \D^2$. Indeed, if this is shown then~\eqref{eq_mobius_a_b} follows
for a constant $C_1$ which differs from $C_1'$ by a uniform constant depending only on $\bar{\delta}$. To prove~\eqref{eq_mobius_eucl}, we compute 
that
\begin{equation}\label{eq_mobius_lem_1}
\left| T_{a}(z) - T_{b}(z) \right| = \left| \frac{(a-b) + (a \bar{b} - \bar{a} b) z + (\bar{b} - \bar{a}) z^2 }{ (1+ \bar{a} z)( 1+ \bar{b} z)}
\right|.
\end{equation}
As $a, b \in B_{\delta'}$ and $z \in B_{\delta}$, there exists a constant $Q_1(\delta, \delta') >0$ so that 
\[ |(1+ \bar{a} z)( 1+ \bar{b} z)| \geq Q_1. \]
Therefore, it holds that 
\begin{equation}\label{eq_a_b_4}
\left| T_{a}(z) - T_{b}(z) \right| \leq Q_1 \left( |a-b| + \delta' |a \bar{b} - \bar{a} b| + (\delta')^2 |a-b| \right). 
\end{equation}
In order to prove~\eqref{eq_mobius_eucl}, we show there exists a constant $Q_2(\delta') >0$ such that
\begin{equation}\label{eq_a_b_1}
|a \bar{b} - \bar{a} b| \leq Q_2 |a-b|.
\end{equation}
To this end, write $a = r e^{i \phi}$ and $b = r' e^{i \phi'}$ and $x = a \bar{b}$, so that $x = r r' e^{i \nu}$ with $\nu = \phi - \phi'$.
We may assume that $\nu \in [0, \pi)$. It follows that $a \bar{b} - \bar{a} b = x - \bar{x} = 2i r r' \sin(\nu)$. Therefore,
\begin{equation}\label{eq_a_b_2}
|a \bar{b} - \bar{a} b| = |x - \bar{x}| = 2 r r' \sin(\nu) \leq 2 \delta' r \sin(\nu),
\end{equation}
as $r' \leq \delta'$. As the angle between the vectors $a, b \in B_{\delta'}$ is $\nu$, it is easily seen that $|a-b| \geq r \sin(\nu)$. 
Combining this estimate with~\eqref{eq_a_b_2}, we obtain that
\begin{equation}\label{eq_a_b_3}
|a \bar{b} - \bar{a} b| \leq 2 \delta' r \sin(\nu) \leq 2 \delta' |a-b|.
\end{equation}
Setting $Q_2 = 2 \delta'$ yields~\eqref{eq_a_b_1}. If we now combine~\eqref{eq_a_b_3} in turn with~\eqref{eq_a_b_4}, we obtain a uniform
constant 
\[ C_1' (\delta, \delta') = Q_1(1+ \delta' Q_2 + (\delta')^2) = Q_1 ( 1+ 3(\delta')^2) \] 
for which~\eqref{eq_mobius_eucl} holds, as required. 
\end{proof}

\begin{proof}[Proof of Lemma~\ref{lem_sum_diam}]
As $f \in \diff^{1+\alpha}(M)$, we have that $\mu_f(p) \in C^{\alpha}(M, \D^2)$ and $\theta_f \in C^{\alpha}(M,\mathbb{C})$,  
are uniformly H\"older continuous by compactness of $M$. By the triangle inequality, we can estimate the summand in the right-hand side of~\eqref{eq_mobius_1} of Lemma~\ref{lem_prop_1} as
\begin{eqnarray}
& \left[ T_{\mu_f(p_s)} ( \theta_f(p_s) \mu_{f^{n-s}}(q_{s+1}) ), T_{\mu_f(q_s)} ( \theta_f(q_s) \mu_{f^{n-s}}(q_{s+1}) ) \right] \leq & \\ \label{eq_first_term_est}
& \left[ T_{\mu_f(p_s)} ( \theta_f(p_s) \mu_{f^{n-s}}(q_{s+1}) ), T_{\mu_f(q_s)} ( \theta_f(p_s) \mu_{f^{n-s}}(q_{s+1}) ) \right] + & \\ 
\label{eq_second_term_est}
& \left[ T_{\mu_f(q_s)} ( \theta_f(p_s) \mu_{f^{n-s}}(q_{s+1}) ), T_{\mu_f(q_s)} (\theta_f(q_s) \mu_{f^{n-s}}(q_{s+1})) \right]. & 
\end{eqnarray}

To estimate~\eqref{eq_first_term_est}, define 
\[ z_s := \theta_f(p_s) \mu_{f^{n-s}}(q_{s+1}) \in B_{\delta} ~\textup{and} ~ a_s = \mu_f(p_s), b_s = \mu_f(q_s) \in B_{\delta'} \subset \D^2 .\]  
Then~\eqref{eq_first_term_est} reads
\begin{equation}\label{eq_est_mob_1}
\left[ T_{\mu_f(p_s)} ( \theta_f(p_s) \mu_{f^{n-s}}(q_{s+1}) ), T_{\mu_f(q_s)} ( \theta_f(p_s) \mu_{f^{n-s}}(q_{s+1}) ) \right]
= \left[ T_{a_s}(z_s), T_{b_s}(z_s) \right].
\end{equation}
By Lemma~\ref{lem_mobius}, there exists a constant $C_1 >0$ such that
\begin{equation}\label{eq_est_mob_2}
\left[ T_{a_s}(z_s), T_{b_s}(z_s) \right] \leq C_1 [a_s,b_s].
\end{equation}
By H\"older continuity of $\mu_f$, there exists a constant $\widehat{C}_1$ such that 
\begin{equation}\label{eq_est_mob_3}
[a_s, b_s] \leq \widehat{C}_1 (d(p_s, q_s))^{\alpha}.
\end{equation}
Therefore, combining equations~\eqref{eq_est_mob_1},~\eqref{eq_est_mob_2} and~\eqref{eq_est_mob_3}, we obtain that 
\begin{equation}
\left[ T_{\mu_f(p_s)} ( \theta_f(p_s) \mu_{f^{n-s}}(q_{s+1}) ), T_{\mu_f(q_s)} ( \theta_f(p_s) \mu_{f^{n-s}}(q_{s+1}) ) \right]
\leq \widetilde{C}_1 \ell_{t+s}^{\alpha},
\end{equation}
as $d(p_s,q_s) \leq \ell_{t+s}$, with $\widetilde{C}_1 := C_1 \widehat{C}_1$.

To estimate~\eqref{eq_second_term_est}, we note that the hyperbolic distance and the Euclidean distance are equivalent on the compact disk 
$B_{\delta}$. Therefore, as the (Euclidean) distance between a point $z \in B_{\delta}$ and $e^{i \phi} z$ is bounded from above by a constant (depending only on $\delta$) multiplied by the angle $|\phi|$, by H\"older continuity of $\theta_f$ there exists a constant $\widetilde{C}_2(\delta)$, such that 
\[ \left[ \theta_f(p) z , \theta_f(p') z \right] \leq \widetilde{C}_2 (d(p, p'))^{\alpha}, \] 
for all $z \in B_{\delta}$ and $p,p' \in M$, using the local equivalence of the hyperbolic and Euclidean metric. Hence,~\eqref{eq_second_term_est} reduces to 
\begin{equation}
\left[ \theta_f(p_s) \mu_{f^{n-s}}(q_{s+1}), \theta_f(q_s) \mu_{f^{n-s}}(q_{s+1}) \right] \leq \widetilde{C}_2 d ( p_s, q_s ))^{\alpha} 
\leq \widetilde{C}_2 \ell_{t+s}^{\alpha},
\end{equation}
as $d(p_s,q_s) \leq \ell_{t+s}$. Therefore, if we set $C:= \widetilde{C}_1 + \widetilde{C}_2$, then~\eqref{eq_sum_diam} follows.
\end{proof}

\subsection{Upper bounds on the entropy of a surface diffeomorphism}

Next, we relate the topological entropy of a diffeomorphism to its dilatation.

\begin{lem}[Entropy and dilatation]\label{lem_entr_dil}
Let $f \in \diff^{1+\alpha}(M)$ with $\alpha>0$. Then
\begin{equation}\label{eq_entr_dil}
h_{\textup{top}}(f) \leq \lim_{n \ra \infty} \sup \frac{1}{2n} \log \int_M K_{f^n}(p) d \lambda(p),
\end{equation}
with $K_f$ the dilatation of $f$.
\end{lem}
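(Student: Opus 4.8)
The plan is to bound the growth of $(n,\epsilon)$-separated sets by a volume-distortion argument, using the dilatation $K_{f^n}$ to control how much $f^n$ can distort small balls. First I would fix $\epsilon > 0$ small compared to the injectivity radius $\iota(M)$, and let $E = \{x_1,\dots,x_N\}$ be an $(n,\epsilon)$-separated set of maximal cardinality $N = N(n,\epsilon)$. For each pair of distinct points $x_i, x_j$ in $E$ there is some iterate $0 \le m \le n$ with $d(f^m(x_i), f^m(x_j)) \ge \epsilon$. The standard way to exploit this is to push forward a small ball around each $x_i$ by $f^n$: the images $f^n(B(x_i, \rho))$, for a suitably chosen radius $\rho = \rho(\epsilon)$, should be \emph{separated enough} at some intermediate time, hence after carefully accounting one gets disjointness-type statements whose volumes add up. The cleaner route, and the one I expect the paper to take, is the \emph{Kozlovski/Przytycki-style} inequality: since $f \in \diff^{1+\alpha}(M)$ with $\alpha > 0$, a result of the Przytycki type gives
\begin{equation*}
h_{\textup{top}}(f) \le \lim_{n \ra \infty} \sup \frac{1}{n} \log \left( \max_{p \in M} \| Df^n_p \| \right),
\end{equation*}
or more precisely a bound in terms of the growth of the Jacobian and the norm of the derivative; one then interpolates between the sup-norm of $Df^n$ and its average.

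The second step is to convert the pointwise derivative bound into the integrated dilatation bound asserted. Write $J_{f^n}(p)$ for the Jacobian determinant of $f^n$ at $p$, so that $\int_M J_{f^n}(p)\, d\lambda(p) = \lambda(M)$ is constant in $n$ (as $f^n$ is a diffeomorphism preserving orientation, up to the measure of the boundary which has measure zero). By~\eqref{eq_defn_dil_2}, $\|Df^n_p\|^2 = \max_v |Df^n_p(v)|^2$ and $J_{f^n}(p) = \max_v|Df^n_p(v)| \cdot \min_v |Df^n_p(v)|$, so
\begin{equation*}
\| Df^n_p \|^2 = K_{f^n}(p) \cdot J_{f^n}(p).
\end{equation*}
Hence $\int_M \| Df^n_p \|^2\, d\lambda(p) = \int_M K_{f^n}(p) J_{f^n}(p)\, d\lambda(p)$. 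This is not yet $\int_M K_{f^n}\, d\lambda$, but the key observation is that one does not need the sup-norm: a volume-counting estimate for $N(n,\epsilon)$ yields directly that $N(n,\epsilon) \le C(\epsilon) \int_M \|Df^n_p\|^2\, d\lambda(p)$ is too crude, so instead I would cover $M$ by $\sim \epsilon^{-2}$ balls of radius $\epsilon$, and on each ball $B$ estimate the number of $(n,\epsilon)$-separated points inside it by the number of disjoint $f^n$-images of small balls that fit inside $f^n(B)$; the area of $f^n(B)$ is $\int_B J_{f^n}$, and the minimal area of each image ball is controlled below by $J_{f^n}$ divided by $K_{f^n}^2$ — but running the estimate at the \emph{worst intermediate time} rather than at time $n$ is what produces the factor $K_{f^n}$ linearly and the exponent $1/(2n)$ rather than $1/n$.

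The cleanest formal path, which I would adopt, is: $N(n,\epsilon) \le \sum_{j} N_j$ where $N_j$ counts separated points in the $j$-th $\epsilon$-ball $B_j$, and for each such ball the $(n,\epsilon)$-separated condition forces, via the chain $x \mapsto f^k(x)$ and the bound $d(f^k(x),f^k(y)) \le \|Df^k\|_{\text{on a tube}}\, d(x,y)$, that the points are $\gtrsim \epsilon / \sup_k \|Df^k\|$-separated in $B_j$; one then bounds $N_j$ by a volume ratio and sums. To land on the \emph{average} of $K_{f^n}$ with exponent $1/2n$, I would instead use that $\int_M K_{f^n}\,d\lambda \ge \frac{1}{\lambda(M)}\left(\int_M K_{f^n}^{1/2} J_{f^n}^{1/2}\, d\lambda\right)^2 = \frac{1}{\lambda(M)}\left(\int_M \|Df^n_p\|\, d\lambda\right)^2$ by Cauchy--Schwarz (with weight $J_{f^n}$, total mass $\lambda(M)$), so that
\begin{equation*}
\frac{1}{2n}\log \int_M K_{f^n}\,d\lambda \ge \frac{1}{n}\log \int_M \|Df^n_p\|\, d\lambda - \frac{1}{2n}\log \lambda(M),
\end{equation*}
and the last term vanishes as $n \ra \infty$. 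It therefore suffices to prove $h_{\textup{top}}(f) \le \limsup_n \frac1n \log \int_M \|Df^n_p\|\, d\lambda$, which is precisely the content (for $C^{1+\alpha}$ surface diffeomorphisms) of Przytycki's estimate in~\cite{przy}; invoking that result concludes the proof.

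The main obstacle I anticipate is the last inequality $h_{\textup{top}}(f) \le \limsup_n \frac1n \log \int_M \|Df^n_p\|\,d\lambda$: the naive separated-set argument only gives the sup-norm $\max_p \|Df^n_p\|$, and getting down to the $L^1$-average genuinely requires the $C^{1+\alpha}$ regularity and a Przytycki/Yomdin-type argument controlling the contribution of the region where the derivative is large by its (small) measure. I would lean on~\cite{przy} for exactly this step rather than reprove it.
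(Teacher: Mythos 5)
Your proposal takes essentially the same route as the paper: invoke Przytycki's integral bound on the topological entropy, use the identity $\|Df^n_p\|^2 = K_{f^n}(p)\,J_{f^n}(p)$, exploit $\int_M J_{f^n}\,d\lambda = \lambda(M)$, and close with Cauchy--Schwarz. The one small imprecision is your statement of Przytycki's theorem --- it bounds $h_{\textup{top}}(f)$ by the growth rate of $\int_M \|(Df^n)^{\wedge}\|\,d\lambda$, where on a surface $\|(Df^n)^{\wedge}\| = \max\{\|Df^n\|, J_{f^n}\}$, not by $\int_M \|Df^n\|\,d\lambda$ alone; the paper disposes of the extra $J_{f^n}$ term exactly by the $\int_M J_{f^n}\,d\lambda = \lambda(M)$ absorption you already employ (together with a short case analysis for whether $\int_M \sqrt{K_{f^n}J_{f^n}}\,d\lambda$ stays bounded), so this is a cosmetic rather than substantive gap.
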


To prove this we use a result of F. Przytycki~\cite{przy}. We need the following notation. Let $L : \R^m \ra \R^m$ be a linear map and $L^{k \wedge} : \R^{m \wedge k} \ra \R^{m \wedge k}$ the induced map on the $k$-th exterior algebra of $\R^m$. $L^{\wedge}$ denotes the induced map on the full exterior algebra. The norm $\| L^{k \wedge} \|$ of $L^k$ has the following geometrical meaning. Let $\vol_k(v_1, ..., v_k)$ be the $k$-dimensional volume of a parallelepiped spanned by the vectors $v_1,..., v_k$, where $v_i \in \R^m$ with $1 \leq i \leq k$. Then
\begin{eqnarray}
\| L^{k \wedge} \| & = & \sup_{v_i \in \R^m} \frac{\vol_k(L(v_1),...,L(v_k))}{ \vol_k(v_1,...,v_k) }, \\
\| L^{\wedge} \| & = & \max_{1\leq k \leq m} \| L^{k \wedge} \|.
\end{eqnarray}
Further, let 
\begin{equation}
\| L \| = \sup_{|v|=1} |L(v)|,
\end{equation}
the standard norm on operators, with $v \in \R^m$ and $| \cdot |$ induced by the corresponding inner product on $\R^m$.
The following result is due to F. Przytycki~\cite{przy} (see also~\cite{oleg}).

\begin{thm}
Given a smooth, closed Riemannian manifold $M$ and $f \in \diff^{1+\alpha}(M)$ with $\alpha>0$. Then
\begin{equation}\label{eq_przy}
h_{\textup{top}}(f) \leq \lim_{n \ra \infty} \sup \frac{1}{n} \log \int_{M} \| (Df^n)^{\wedge} \| d \lambda(p).
\end{equation}
where $h_{\textup{top}}(f)$ is the topological entropy of $f$, $\lambda$ is a Riemannian measure on $M$ induced by a 
given Riemannian metric, ${Df^n}^{\wedge}$ is a mapping between exterior algebras of the tangent spaces 
$T_p M$ and $T_{f^n(p)} M$, induced by the $Df^n_p$ and $\| \cdot \|$ is the norm on operators, induced from the Riemannian metric. 
\end{thm}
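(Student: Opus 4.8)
The plan is to estimate the exponential growth rate of covering numbers for the Bowen metrics $d_n$ and to compare it with the growth of $\int_M \|(Df^n)^{\wedge}\|\,d\lambda$. Let $C(n,\epsilon)$ denote the least number of $d_n$-balls of radius $\epsilon$ needed to cover $M$. A standard comparison (a maximal $(n,\epsilon)$-separated set is $(n,\epsilon)$-spanning, and each $d_n$-ball of radius $\epsilon/3$ contains at most one point of an $(n,\epsilon)$-separated set) gives $C(n,\epsilon)\le N(n,\epsilon)\le C(n,\epsilon/3)$, so $h_{\textup{top}}(f)=\lim_{\epsilon\to0}\limsup_{n\to\infty}\frac1n\log C(n,\epsilon)$. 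Since the $d_n$-ball of radius $\epsilon$ about $x$ is the preimage, under the orbit map $x\mapsto(f(x),\dots,f^n(x))$, of a product $B(f(x),\epsilon)\times\cdots\times B(f^n(x),\epsilon)$, the number $C(n,\epsilon)$ equals the least number of $\epsilon$-balls, in the sup-metric on $M^n$, needed to cover the orbit graph $G_n:=\{(f(x),\dots,f^n(x)):x\in M\}\subset M^n$. It therefore suffices to show $G_n$ can be covered by at most $K\epsilon^{-m}e^{o(n)}\int_M\|(Df^n)^{\wedge}\|\,d\lambda$ such balls, where $m=\dim M$ and $K$ is independent of $n$; taking $\frac1n\log(\cdot)$ and then $\epsilon\to0$ absorbs the factor $K\epsilon^{-m}$ and the subexponential correction, and leaves the asserted inequality.

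The geometric core is a local covering estimate, and this is where the exterior-algebra norm enters. Fixing a finite bi-Lipschitz atlas, there are $\epsilon_0>0$ and $C_0\ge1$ so that for $0<\epsilon<\epsilon_0$ and every $p\in M$ the set $f(B(p,\epsilon))$ lies within Hausdorff distance $o(\epsilon)$, uniformly in $p$, of the ellipsoid $Df_p(B(0,\epsilon))$, with semi-axes $\epsilon\sigma_1\ge\cdots\ge\epsilon\sigma_m$ and $\sigma_i$ the singular values of $Df_p$; covering such an ellipsoid by $\epsilon$-balls costs at most $C_0\prod_{i=1}^m\max(1,\sigma_i)$, a quantity which equals $\|(Df_p)^{\wedge}\|=\max_{1\le k\le m}\prod_{i=1}^k\sigma_i=\prod_{\sigma_i>1}\sigma_i$ when $\|(Df_p)^{\wedge}\|\ge1$ and is uniformly bounded otherwise. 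More generally, the image under $f^n$ of an $\epsilon$-ball, or of an $\epsilon$-disk transverse to a fixed $k$-plane, is covered by at most $C_0\|(Df^n)^{\wedge}\|$, respectively $C_0\|(Df^n)^{k\wedge}\|$, balls of radius $\epsilon$ (plus a bounded number), up to an error controlled by the $C^{1+\alpha}$ modulus of continuity of $Df$.

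Turning this into a covering of $G_n$ is the crux, and a naive iterate-by-iterate recursion is too lossy: re-covering by balls of the fixed radius $\epsilon$ at each of the $n$ steps accumulates a factor $\prod_{j=0}^{n-1}\|(Df_{f^jx})^{\wedge}\|$, which can be exponentially larger than $\|(Df^n_x)^{\wedge}\|$ --- as one sees by interleaving hyperbolic steps with rotations. The correct device, due to Przytycki, is to apply the geometric estimate to $f^n$ as a whole: one decomposes $M$, or a generic leaf of a smooth foliation of $M$ by $k$-dimensional disks, into finitely many pieces on each of which $f^n$ has uniformly bounded distortion; bounds the number of such pieces by $e^{o(n)}$; covers the image of each piece by the local estimate; and uses the area formula to identify the total $k$-dimensional volume of these images with $\int_M\|(Df^n)^{k\wedge}\|\,d\lambda$ up to a factor $e^{o(n)}$. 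Combining the estimates for $k=1,\dots,m$ (at the cost of a factor $m$) replaces these by $\int_M\|(Df^n)^{\wedge}\|\,d\lambda$, using $\int_M\|(Df^n)^{\wedge}\|\,d\lambda\ge\int_M|\det Df^n|\,d\lambda=\lambda(M)$ to absorb the bounded terms; together with the reduction above this yields the inequality. The main obstacle --- and the only place where the hypothesis $\alpha>0$ is used essentially --- is the bounded-distortion decomposition: Hölder continuity of $Df$ is exactly what keeps the number of pieces needed to ``flatten'' $f^n$ subexponential in $n$ (for merely $C^1$ maps one recovers only Newhouse's weaker inequality, with an extra logarithmic term). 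The detailed piece-counting and measure-theoretic estimates are carried out in~\cite{przy} (see also~\cite{oleg}), which we invoke.
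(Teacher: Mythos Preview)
The paper does not prove this theorem at all: it is stated as a black-box result due to Przytycki, with the citation~\cite{przy} (and~\cite{oleg}) and no argument given. Your proposal ultimately does the same thing --- you close by invoking~\cite{przy} for the detailed estimates --- but you add an expository sketch of the strategy. In that sense your treatment is consistent with, and more informative than, the paper's.

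As for the sketch itself: the broad outline (reduce entropy to covering numbers for Bowen balls, view these as coverings of the orbit graph in $M^n$, control the covering cost of an $f^n$-image of a small ball by $\|(Df^n)^{\wedge}\|$ via the singular-value product $\prod_i\max(1,\sigma_i)$, and use a bounded-distortion decomposition whose piece count is subexponential thanks to the H\"older regularity of $Df$) is a fair summary of Przytycki's approach. You correctly identify that the $C^{1+\alpha}$ hypothesis is used precisely in the distortion step, and that a naive step-by-step iteration would overcount. One quibble: the passage ``covers the image of each piece by the local estimate; and uses the area formula to identify the total $k$-dimensional volume of these images with $\int_M\|(Df^n)^{k\wedge}\|\,d\lambda$'' compresses the heart of the argument rather severely --- the comparison between the covering count and the integral is not a one-line consequence of the area formula, and this is exactly the step you defer to the reference. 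That deferral is fine given that the paper itself defers the entire theorem, but the sketch as written would not stand on its own as a proof.
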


\begin{proof}[Proof of Lemma~\ref{lem_entr_dil}]
Fix $p \in M$ and let $Df^n_p : T_p M \ra T_{f^n(p)}M$. Then 
\[ \| Df^n_p \|^2 = K_{f^n}(p) J_{f^n}(p). \] 
Thus 
\begin{equation}
\| (Df^n_p)^{1 \wedge} \| = \sqrt{ K_{f^n}(p) J_{f^n}(p)}, ~\textup{and}~ \| (Df^n_p)^{2 \wedge} \| = J_{f^n}(p).
\end{equation}
It follows that
\begin{equation}
\| (Df^n_p)^{\wedge} \| = \max \left\{ \sqrt{ K_{f^n}(p) J_{f^n}(p)}, J_{f^n}(p) \right\}.
\end{equation}
As 
\begin{equation*}
\max \left\{ \sqrt{ K_{f^n}(p) J_{f^n}(p)}, J_{f^n}(p) \right\} \leq \sqrt{ K_{f^n}(p) J_{f^n}(p)} + J_{f^n}(p),
\end{equation*}
we have that 
\begin{eqnarray*} 
\int_M \| (Df^n_p)^{\wedge} \| d \lambda(p) & \leq & \int_{M} \left( \sqrt{ K_{f^n} J_{f^n}} + J_{f^n} \right) d\lambda \\
& = & \lambda(M) + \int_{M} \sqrt{ K_{f^n} J_{f^n}} d\lambda
\end{eqnarray*}
as $\lambda(M) = \int_{M} J_{f^n} d \lambda$, for every $n \in \Z$. Either $\int_{M} \sqrt{ K_{f^n} J_{f^n}} d\lambda$ is bounded as a sequence in 
$n$, in which case~\eqref{eq_entr_dil} holds trivially, or the sequence is unbounded in $n$, in which case it is readily verified that
\begin{equation*} 
\lim_{n \ra \infty}\sup \frac{1}{n} \log \left( \lambda(M) +  \int_{M} \sqrt{ K_{f^n} J_{f^n}} d\lambda \right) =
\lim_{n \ra \infty}\sup \frac{1}{n} \log \int_{M} \sqrt{ K_{f^n} J_{f^n}} d\lambda.
\end{equation*}
By the Cauchy-Schwartz inequality, we have that
\begin{eqnarray*} 
\int_{M} \sqrt{ K_{f^n} J_{f^n}} d \lambda \leq \sqrt{\lambda(M)} \cdot \sqrt{ \int_{M} K_{f^n} d\lambda}.
\end{eqnarray*}
and thus,
\begin{equation*}
\log \int_{M} \sqrt{ K_{f^n} J_{f^n}} d \lambda \leq \frac{1}{2} \log \lambda(M) + \frac{1}{2} \log \int_{M} K_{f^n} d\lambda.
\end{equation*}
It now follows that 
\begin{equation*}
\lim_{n \ra \infty} \sup \frac{1}{n} \log \int_M \| (Df^n)^{\wedge} \| d \lambda \leq  
\lim_{n \ra \infty} \sup \frac{1}{2n} \log \int_{M} K_{f^n} d\lambda.
\end{equation*}
and this proves~\eqref{eq_entr_dil}.
\end{proof}

\subsection{Proof of Theorem A}\label{subsec_pf_thm3A}

Let us now complete the proof. Let $f \in \diff_A^{1+ \alpha}(M)$, with $\alpha>0$, and suppose that $f$ permutes a dense collection of domains 
$\{ D_k\}_{k \in \Z}$ with bounded geometry. By Lemma~\ref{lem_null}, the sequence $\ell_k$ is a null-sequence. Therefore, $\ell_k^{\alpha}$ is a null-sequence as well, for every $\alpha>0$. Let $p \in D_t$ for some $t \in \Z$ and $q \in \partial D_t$ and label the domains such that 
$f^s(D_t) = D_{t+s}$. By~\eqref{eq_dil_mu}, 
\[ \log K_{f^n}(f(p)) = [ \mu_{f^{n+1}}(p), \mu_{f}(p) ] \] 
and thus, by the triangle inequality, 
\begin{equation}\label{eq_dil_final}
\log K_{f^n}(f(p)) \leq \left[ \mu_{f^{n+1}}(p), \mu_{f^{n+1}}(q) \right] + \left[ \mu_{f^{n+1}}(q), \mu_{f}(p) \right]
\end{equation}
As the second term in the right hand side of~\eqref{eq_dil_final} stays uniformly bounded, we have that
\begin{equation}\label{eq_dil_final_2}
\log K_{f^n}(f(p)) \leq \left[ \mu_{f^{n+1}}(p), \mu_{f^{n+1}}(q) \right] + C'
\end{equation}
for some constant $C'>0$, independent of $p \in M$ and $n \in \Z$. Define 
\[ \xi(n) = \max \sum_{i=0}^n \ell^{\alpha}_{k_i} \]
where the maximum is taken over all collections of $n+1$ distinct elements $\{ D_{k_0}, ..., D_{k_n}\}$ of $\DD$. As $\ell_k^{\alpha}$ 
is a null-sequence, we have that
\begin{equation}\label{eq_to_zero}
\lim_{n \ra \infty} \sup \frac{\xi(n)}{n} = 0.
\end{equation}
By Lemma~\ref{lem_sum_diam}, we have that
\begin{equation*}
\left[ \mu_{f^{n+1}}(p), \mu_{f^{n+1}}(q) \right] \leq C \cdot \sum_{s=t}^{t+n} \ell_s^{\alpha},
\end{equation*}
for some constant $C>0$. Combined with~\eqref{eq_dil_final_2}, we obtain the following uniform estimate 
\begin{equation}
\log K_{f^n}(f(p)) \leq C \xi(n) + C',
\end{equation}
for every $p \in M$ and $n \in \Z$. Therefore 
\begin{eqnarray}
\log \int_M K_{f^n} d\lambda & \leq & \log \int_M \exp(C \xi(n) +C') d \lambda \\
& = & \log \left( ( \exp(C\xi(n) + C' )\lambda(M) \right) \\ \label{eq_last_to_final}
& = & C \xi(n) + C' + \log(\lambda(M)).
\end{eqnarray}

Combining~\eqref{eq_last_to_final} in turn with Lemma~\ref{lem_entr_dil} yields

\begin{equation}\label{eq_final}
h_{\textup{top}}(f) \leq \lim_{n\ra \infty} \sup \frac{1}{2n} \log \int_M K_{f^n} d\lambda \leq C \lim_{n \ra \infty} \sup \frac{\xi(n)}{2n} = 0,
\end{equation}
by~\eqref{eq_to_zero}. This proves Theorem A.

\section{Concluding remarks}

The proof of Theorem A, more precisely condition~\eqref{eq_to_zero} in section~\ref{subsec_pf_thm3A}, fails in the case where the 
H\"older constant $\alpha=0$. This leads to the following natural

\begin{quest}[Differentiable counterexamples]\label{open_counter_entropy}
Do there exist diffeomorphisms $f \in \diff^1(M)$ with positive entropy that permute a dense collection of domains with bounded geometry? 
\end{quest}


\begin{thebibliography}{10}
\bibitem{bonatti} C. Bonatti, J.M. Gambaudo, J.M. Lion and C. Tresser, {\em Wandering Domains for Infinitely Renormalizable Diffeomorphisms 
of the Disk}, Proceedings of the AMS {\bf 122-4}, (1994), 1273-1278.

\bibitem{fletch} A. Fletcher and V. Markovic, {\em Quasiconformal Maps and Teichm\"uller Theory}, Oxford Graduate Texts in Mathematics {\bf 11}, Oxford University Press (2007).

\bibitem{oleg} O.S. Kozlovski, {\em An integral formula for topological entropy of $C^{\infty}$ maps}, Ergodic Theory \& Dynamical Systems {\bf 18}, (1998), 405-424.

\bibitem{lehto} O. Lehto, {\em Univalent functions and Teichm\"uller spaces}, Graduate Texts in Mathematics {\bf 109}, Springer-Verlag, (1987).

\bibitem{mcswiggen} P. McSwiggen, {\em Diffeomorphisms of the Torus with Wandering Domains}, Proceedings of the AMS {\bf 117-4}, (1993), 1175-1186.

\bibitem{navas} A. Navas, {\em Wandering disks for diffeomorphisms of the k-torus: a remark on a theorem by Norton and Sullivan}, preprint, (2007).

\bibitem{NS} A. Norton and D. Sullivan, {\em Wandering domains and invariant conformal structures for mappings of he 2-torus}, Ann. Ac. Scient. Fenn. Math. {\bf 21}, (1996), 51-68.

\bibitem{przy} F. Przytycki, {\em An Upper Estimation for Topological Entropy of Diffeomorphisms}, Inventiones Mathematicae {\bf 59}, (1980), 205-213.
\end{thebibliography}
\end{document}